\newtheorem{theorem}{\bf Theorem}
\newtheorem{lemma}[theorem]{\bf Lemma}
\newtheorem{corollary}[theorem]{\bf Corollary}
\newtheorem{example}[theorem]{\bf Example}
\newtheorem{remark}[theorem]{\bf Remark}
\newenvironment{proof}{{\fontshape{sc} Proof.}}{\hfill $\Box$}
\newcommand{\nsubsection}{\setcounter{equation}{0}\subsection}
\begin{document}

\title {Fast approximation of  solutions of SDE's
with oblique reflection on an orthant
}
\author {Krzysztof T. Czarkowski (Toru\'n)\thanks{e-mai: kczark@mat.umk.pl}}
\maketitle

\begin{abstract}
We consider the discrete "fast" penalization scheme for SDE's driven
by general semimartingale on orthant $\mathbb{R}_{+}^{d}$ with
oblique reflection.
\end{abstract}
{\em Mathematics Subject Classification}: Primary 60 H 20; Secondary 60 H 99.

\nsubsection{Introduction} \label{introd}
 Suppose we have a
$d$\--dimensional semimartingale $Z=(Z^1,\ldots ,Z^d)^T$, a
Lipschitz continuous function
$\sigma:\mathbb{R}^{d}\longrightarrow\mathbb{R}^{d}\otimes\mathbb{R}^{d}$,
and a nonnegative $d\times d$ matrix $Q$ with zeros on the diagonal
and spectral radius $\rho(Q)$ strictly less than 1. Consider a
$d$\--dimension stochastic differential equation (SDE) on orthant
$\mathbb{R}_{+}^{d}$ with oblique reflection:
\begin{equation}
 \label{eq1}
 X_t=X_0 +\int_0^t\sigma(X_{s-})dZ_{s}+(1-Q^{T})K_{t},\;\; t\in\mathbb{R}_{+} .
\end{equation}
Equation of this type (\ref{eq1}) was introduced by Harrison and
Reiman \cite{hr}. Later it was discussed by Dupis and Ishi
\cite{di}. Czarkowski and S\l omi\'{n}ski \cite{cs} in their paper
introduced a numerical scheme for approximation of solution of SDE
(\ref{eq1}). In this paper we will define a new numerical scheme,
see: Section \ref{scheme} (\ref{eq3}). For this scheme (or it's
equivalent form (\ref{eq5})), in Section \ref{fasfd},  we will
proove, that:
\[
E\sup_{s\leq t} |X^{n}_{s}-X_{s}|^{2p}={\cal O}((\frac{\ln n}{n})^p)
\]

Appendix A includes a description of  some properties of $\Pi_{Q}$
projection on the orthant $\mathbb{R}_{+}^{d}$.

Throughout the paper we assume that
$\rho^{n}_{t}=\max\{i/n;i\in{\mathbb N}\cup\{0\},i/n\leq t\}$ and
$Z^{(n)}_{t}$ is a discretization of $Z$, ie.
$Z^{(n)}_{t}=Z_{\rho^{n}_{t}}$; $\mathop{\longrightarrow}_{\cal P}$
denotes convergence in probability;
$\mathbb{D}(\mathbb{R}_{+}\,,\,\mathbb{R}^{d})$ denotes space of
{\em "cadlag"} function
$y:\mathbb{R}_{+}\longrightarrow\mathbb{R}^{d}$; $\Delta
y_{t}=y_{t}-y_{t-}$ and $\omega_{\frac{1}{n}}(y,[0,t])$ denotes
modulus of continuity of $y$ on $[0,t]$.

Let us define function: $[z]^{+}=\max\{z,0\}$ for $z\in\mathbb{R}$
and by analogy function for
$z=(z^{1},\ldots,z^{d})^{T}\in\mathbb{R}^{d}$: $[z]^{+}= \left(
\left[z^{1}\right]^{+},\ldots,\left[z^{d}\right]^{+} \right)^{T}.$
We will use  norm $\|Q\|=\max_{1\leq i\leq d}\sum_{j=1}^{d} q_{ij}.$

 \nsubsection{The Skorokhod problem on an orthant}
 \label{skor}
 Let $Q$ be a nonnegative  matrix with
zeros on the diagonal and spectral radius $\rho(Q)<1$ and let
$y\in\mathbb{D}(\mathbb{R}_{+}\,,\,\mathbb{R}^{d})$ with
$y_0\in\mathbb{R}_{+}^{d}$. Following Harrison and Reiman \cite{hr}
a pair $(x,k)\in\mathbb{D}(\mathbb{R}_{+}\,,\,{\mathbb R}^{2d})$ is
called a solution to the Skorokhod problem
\begin{equation}
\label{eq2}
 x_{t}=y_{t}+(I-Q^{T})k_{t},\quad t\in\mathbb{R}_{+},
\end{equation}
on $\mathbb{R}_{+}^{d}$ associated with $y$, if (\ref{eq2}) is
satisfied and
\begin{eqnarray*}
& & x_{t}\in \mathbb{R}_{+}^{d},\quad t\in\mathbb{R}_{+},\\
& & k^{j}\, \mbox{\rm is nondecreasing},\, k^{j}_0=0\,\,\mbox{\rm
and} \,\int_{0}^{t} x^{j}_{s}\,dk^{j}_{s}=0\,\, \mbox{\rm for}\,\,
j=1,\ldots,d,\,\,t\in\mathbb{R}_{+}.
\end{eqnarray*}

\begin{remark}
\label{rm1}
\begin{enumerate}
\item For every $y\in\mathbb{D}(\mathbb{R}_{+}\,,\,\mathbb{R}^{d})$ with $y_{0}\in\mathbb{R}_{+}^{d}$ exist a unique
solution $(x_{t},k_{t})$ of the Skorokhod problem.
\item If additionally $\|Q\|<1$ then $k_{t}$  satisfy equation
\begin{equation}
k_{t}=F(k)_{t},
\end{equation}
where \[F(u)_{t}=\sup_{s\leq t}[Q^{T} u_{s} -y_{s}]^{+}.\]
\end{enumerate}
\end{remark}

In this paper, like in \cite{hr} and \cite{cs}, we make a technical
assumption that:
\begin{equation} \label{ta}\|Q\|<1.\end{equation}

\nsubsection{Fast approximation scheme} \label{scheme}
 Let $(x,k)$
be a solution to the Skorokhod problem for
$y\in\mathbb{D}(\mathbb{R}_{+}\,,\,\mathbb{R}^{d})$, with
$y_0\in\mathbb{R}_{+}^{d}$.

For every $n\in{\mathbb N}$ we define the approximations
$(x^{n},k^{n})$ of $(x,k)$:
\begin{equation}
\label{eq3}
\left\{
\begin{array}{rcl}
k^{n}_{0}&=&0,\;\;\; x^{n}_{0}=y_{0},\\
k^{n}_{(i+1)/n}&=&[Q^{T}k^{n}_{i/n}-y_{(i+1)/n}]^{+}\vee k^{n}_{i/n},\\
x^{n}_{(i+1)/n}&=&y_{(i+1)/n}+(I-Q^{T})k^{n}_{(i+1)/n},\\
k^{n}_{t}&=&k^{n}_{i/n},\;\; x^{n}_{t}=x^{n}_{i/n}, \mbox{ for
}t\in[\frac{i}{n},\frac{i+1}{n}).
\end{array}
\right.
\end{equation}

\begin{remark}
We can write another, but equivalent form of $k^{n}$,
$x^{n}$.\newline Note that for every $n\in{\mathbb N}$,
$i\in{\mathbb N}\cup\{0\}$:
\begin{eqnarray}
\label{eq4}
 k^{n}_{(i+1)/n}&=&[(Q^{T}-I)k^{n}_{i}-y_{(i+1)/n}]^{+}+k^{n}_{i/n}\\
 \nonumber
 &=&[-(x^{n}_{i/n}+\Delta y_{(i+1)/n})]^++k^{n}_{i/n},\\
\label{eq5} x^{n}_{(i+1)/n}&=&x^{n}_{i/n}+\Delta
y_{(i+1)/n}+(I-Q^{T})[-x^{n}_{i/n}-\Delta y_{(i+1)/n}]^{+},
\end{eqnarray}
where $\Delta y_{(i+1)/n}=y_{(i+1)/n}-y_{i/n}$.
\end{remark}

Formulas (\ref{eq3}) and (\ref{eq5}) are equivalent, but (\ref{eq5})
is simpler to calculate.
\begin{remark}
\label{rm4} We can see that $k^{n}_{t}$ satisfy equation:
\begin{equation}
\label{eq6}k^{n}_{t}=F^{n}(k^{n,(n-)})_{t}, \end{equation}
 where
$F^n(u)_{t}=\sup_{s\leq t}[Q^{T} u_{s}-y^{(n)}_{s}]^{+}$ and
$u_{t}^{(n-)}=u_{(i-1)/n},\;\;\;t\in[{i/n},{(i+1)/n}).$
\end{remark}

 The next two theorems describe some properties of
scheme (\ref{eq3}).
\begin{theorem}
\label{th3} There exist a constant ${\cal C}>0$ depending only on
$Q$ such that for every
$y\in\mathbb{D}(\mathbb{R}_{+}\,,\,\mathbb{R}^{d})$,
$y_{0}\in\mathbb{R}_{+}^{d}, t\in \mathbb{R}_{+}$:
\begin{equation}
\label{lab34} \sup_{s\leq t}|x^{n}_s-x_s|+\sup_{s\leq t}|k^{n}_{s}
-k_{s} |\leq {\cal C} \omega_{\frac{1}{n}}(y,[0,t]).
\end{equation}
\end{theorem}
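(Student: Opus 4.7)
The plan is to reduce Theorem~\ref{th3} to the fixed-point characterizations of $k$ and $k^n$ supplied by Remarks~\ref{rm1} and \ref{rm4}, and then to exploit nonnegativity of $Q$ together with $\rho(Q)<1$ to invert a linear componentwise inequality. Applying the elementary estimates $|[a]^{+}-[b]^{+}|\le|a-b|$ and $|\sup a_s-\sup b_s|\le\sup|a_s-b_s|$ to the identities $k=F(k)$ and $k^n=F^n(k^{n,(n-)})$ gives, componentwise,
\[
|k^n_t-k_t|\le Q^{T}\sup_{s\le t}|k^{n,(n-)}_s-k_s|+\omega_{\frac{1}{n}}(y,[0,t])\mathbf{1},
\]
since $\sup_{s\le t}|y_s-y^{(n)}_s|\le\omega_{\frac{1}{n}}(y,[0,t])$ by construction of $y^{(n)}$.

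Next I would dispose of the time shift hidden in $k^{n,(n-)}$. For $s\in[j/n,(j+1)/n)$ I split
\[
|k^{n,(n-)}_s-k_s|\le|k^n_{(j-1)/n}-k_{(j-1)/n}|+|k_{(j-1)/n}-k_s|.
\]
The first summand is absorbed into $\sup_{r\le t}|k^n_r-k_r|$; the second is an increment of $k$ over an interval of length at most $2/n$, which by the Lipschitz property of the Skorokhod map (Harrison--Reiman \cite{hr}, as recalled in Appendix~A) is bounded by $c_1\omega_{\frac{1}{n}}(y,[0,t])$ for some $c_1=c_1(Q)$.

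Taking $\sup_{r\le t}$ on the left and rearranging, the two displays combine into a componentwise linear inequality
\[
(I-Q^{T})\sup_{r\le t}|k^n_r-k_r|\le c_2\,\omega_{\frac{1}{n}}(y,[0,t])\mathbf{1}.
\]
Because $\rho(Q)<1$, the matrix $(I-Q^{T})^{-1}=\sum_{j\ge 0}(Q^{T})^{j}$ exists and has nonnegative entries, so inversion preserves the direction of the inequality and yields $\sup_{s\le t}|k^n_s-k_s|\le c_3\omega_{\frac{1}{n}}(y,[0,t])$. The matching bound for $x^n-x$ follows at once from $x_t-x^n_t=(y_t-y^{(n)}_t)+(I-Q^{T})(k_t-k^n_t)$, and summing the two contributions produces (\ref{lab34}) with $\mathcal{C}=\mathcal{C}(Q)$.

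I expect the main obstacle to be the second step: because the argument of $F^n$ is $k^{n,(n-)}$ rather than $k^n$, the approximation $k^n$ is \emph{not} a clean fixed point of a contraction, and recovering the sharp rate requires a quantitative modulus-of-continuity version of the Skorokhod-map Lipschitz estimate, not merely its pointwise form. Once this estimate is accepted from \cite{hr} (or the $\Pi_{Q}$-machinery of Appendix~A), the hypothesis $\rho(Q)<1$ handles the linear inversion transparently.
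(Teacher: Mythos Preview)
Your argument is correct but organized differently from the paper's. The paper works in scalar norm under the standing assumption $\|Q\|<1$ (equation~(\ref{ta})) and decomposes $k^n-k=F^n(k^{n,(n-)})-F(k)$ into three pieces by inserting the intermediate terms $F^n(k^n)$ and $F(k^n)$; the time-shift contribution $|F^n(k^{n,(n-)})-F^n(k^n)|$ is then handled by bounding one-step \emph{increments of the discrete scheme $k^n$}, iterating the recursion and summing a geometric series in $\|Q\|$. You instead work componentwise, compare the two fixed-point identities directly, and handle the time shift by bounding \emph{increments of the true solution $k$} via the Lipschitz property of the Skorokhod map from \cite{hr}; this lets you close the estimate by inverting $(I-Q^{T})$ using only $\rho(Q)<1$ and the nonnegativity of its Neumann series. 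Your route is therefore slightly more general (it does not need the extra hypothesis $\|Q\|<1$) but less self-contained, since it imports the modulus-of-continuity bound for $k$ from outside rather than deriving everything from the scheme itself. One small inaccuracy: Appendix~A concerns the projection $\Pi_{Q}$, not the Lipschitz estimate for the Skorokhod map; the reference you need is \cite{hr} or \cite{cs}.
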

\begin{proof}
Since $\sup_{s\leq t}|x^{n}_s-x_s|< ||Q|| \sup_{s\leq t}|k^{n}_{s}
-k_{s} |+\omega_{\frac{1}{n}}(y,[0,t])$ we estimate only first term
i.e. $\sup_{s\leq t}|k^{n}_{s} -k_{s} |$.

 We assumed that (\ref{ta}) is satisfied, that means that
$\|Q\|<1$.

Firstly we proof (\ref{lab34}). From Remarks \ref{rm1} and
\ref{rm4}:
\begin{eqnarray*}
&&\sup_{s\leq t} |k^{n}_s-k_s|=\sup_{s\leq t}
|F^n(k^{n,(n-)})_{s}-F(k)_s|\\ &\leq& \sup_{s\leq t}
|F^n(k^{n,(n-)})_s-F^n(k^{n})_s|
+\sup_{s\leq t} |F^n(k^{n})_s-F(k^{n})_s|\\ &&+\sup_{s\leq t} |F(k^{n})_s -F(k)_s|\\
&=&I_t^1+I_t^2+I_t^3
\end{eqnarray*}
Now we estimate every part separately:
\begin{eqnarray*}
I_t^1&=& \sup_{s\leq t} |F^n(k^{n,(n-)})_s -F^n(k^{n})_s|\leq
||Q||\max_{\frac{i}{n}\leq t} |k^{n}_{(i-1)/n}-k^{n}_{i/n}|\\
&\leq& ||Q||^{2}\max_{\frac{i}{n}\leq
t}|k^{n}_{(i-2)/n}-k^{n}_{(i-1)/n}| +||Q||\max_{\frac{i}{n}\leq
t}|y_{(i-1)/n}-y_{i/n}|\\ &\leq&
\frac{||Q||}{1-||Q||}\omega_\frac{1}{n}(y,[0,t]),\\
I_t^2&=&\sup_{s\leq t} |F^n(k^{n})_s -F(k^{n})_s|\\ & \leq&
\sup_{s\leq t} |[Q^{T}k^{n}_{s}-y_{s}^{(n)} ]^+ -[Q^Tk^{n}_{s}-y_{s}
]^+|\\ & \leq &\sup_{s\leq t}
|y^{(n)}_{s}-y_{s}| = \omega_\frac{1}{n}(y,[0,t]),\\
I_t^3&=&\sup_{s\leq t} |F(k^{n})_s -F(k)_s| \leq ||Q||\sup_{s\leq t}
|k^{n}_{s}-k_{s}|
\end{eqnarray*}
And now we have:
\[
\sup_{s\leq t} |k^{n}_{s}-k_{s}|
\leq\frac{||Q||}{1-||Q||}\omega_{\frac{1}{n}}(y,[0,t])+\omega_{\frac{1}{n}}(y,[0
,t])+||Q||\sup_{s\leq t}|k^{n}_{s}-k_{s}|
\]
\end{proof}

\begin{corollary}
For every  $y\in\mathbb{C}(\mathbb{R}_{+}\,,\,\mathbb{R}^{d})$,
$y_{0}\in\mathbb{R}_{+}^{d}$ we have
\[\sup_{s\leq t}|x^{n}_{s}-x_{s}|+\sup_{s\leq t}|k^{n}_{s}-k_{s}|\rightarrow 0.\]
\end{corollary}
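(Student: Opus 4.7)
The proof plan is almost immediate from Theorem \ref{th3}. The idea is simply to observe that the bound
\[
\sup_{s\leq t}|x^{n}_{s}-x_{s}|+\sup_{s\leq t}|k^{n}_{s}-k_{s}|\leq \mathcal{C}\,\omega_{\frac{1}{n}}(y,[0,t])
\]
from Theorem \ref{th3} requires no regularity of $y$ beyond the c\`adl\`ag setting, so we can apply it directly to the continuous $y$ in the hypothesis.

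The only remaining step is to show that $\omega_{\frac{1}{n}}(y,[0,t])\to 0$. This is a standard fact: any continuous function on the compact interval $[0,t]$ is uniformly continuous by the Heine-Cantor theorem, and uniform continuity is precisely the statement that the modulus of continuity $\omega_{\delta}(y,[0,t])$ tends to $0$ as $\delta\to 0$. Applying this with $\delta=\tfrac{1}{n}$ yields the desired convergence.

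There is no real obstacle here. The whole argument fits in two lines: first invoke Theorem \ref{th3}, then invoke uniform continuity of $y$ on $[0,t]$. One may want to note that the constant $\mathcal{C}$ is independent of $n$ and of $y$, so it plays no role in the limit.
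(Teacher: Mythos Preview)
Your argument is correct and is exactly the intended one: the paper states this corollary immediately after Theorem~\ref{th3} without proof, because the inequality (\ref{lab34}) combined with $\omega_{\frac{1}{n}}(y,[0,t])\to 0$ for continuous $y$ gives the result directly.
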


\begin{theorem}
\label{th4} There exist a constant ${\cal C}>0$ depending only on
$Q$ such that for every
$y^{1},y^{2}\in\mathbb{D}(\mathbb{R}_{+}\,,\,\mathbb{R}^{d})$,
$y^{1}_{0},y^{2}_{0}\in\mathbb{R}_{+}^{d}$ :
\[
 \sup_{s\leq t}|k_{s}^{1,n} -k_{s}^{2,n} |+\sup_{s\leq t}|x_{s}^{1,n}-x_{s}^{2,n}|\leq {\cal C} \sup_{s\leq
t}|y^{1}_{s}-y^{2}_{s}|.
\]
\end{theorem}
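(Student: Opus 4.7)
The plan is to adapt the argument from the proof of Theorem \ref{th3}, replacing the comparison of the discrete scheme with the continuous Skorokhod solution by a comparison of two instances of the discrete scheme driven by $y^1$ and $y^2$. By Remark \ref{rm4} I have $k^{i,n}_{t}=F^{i,n}(k^{i,n,(n-)})_{t}$ with $F^{i,n}(u)_{t}=\sup_{s\leq t}[Q^{T}u_{s}-y^{i,(n)}_{s}]^{+}$, so I decompose
\begin{eqnarray*}
|k^{1,n}_{t}-k^{2,n}_{t}| &\leq& |F^{1,n}(k^{1,n,(n-)})_{t}-F^{1,n}(k^{2,n,(n-)})_{t}|\\
& & +\;|F^{1,n}(k^{2,n,(n-)})_{t}-F^{2,n}(k^{2,n,(n-)})_{t}|.
\end{eqnarray*}

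The first summand, by $1$-Lipschitz continuity of $[\,\cdot\,]^{+}$ combined with the operator-norm bound for $Q^{T}$, is controlled by $\|Q\|\sup_{s\leq t}|k^{1,n,(n-)}_{s}-k^{2,n,(n-)}_{s}|$, which since $k^{n,(n-)}$ is only a one-step shift of $k^{n}$ is itself at most $\|Q\|\sup_{s\leq t}|k^{1,n}_{s}-k^{2,n}_{s}|$. In the second summand the common term $Q^{T}k^{2,n,(n-)}_{s}$ cancels inside the positive parts, so it is bounded by $\sup_{s\leq t}|y^{1,(n)}_{s}-y^{2,(n)}_{s}|\leq\sup_{s\leq t}|y^{1}_{s}-y^{2}_{s}|$ (the discretisation at grid points cannot increase the supremum).

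Taking $\sup_{s\leq t}$ on the left-hand side yields
\[
\sup_{s\leq t}|k^{1,n}_{s}-k^{2,n}_{s}|\leq\|Q\|\sup_{s\leq t}|k^{1,n}_{s}-k^{2,n}_{s}|+\sup_{s\leq t}|y^{1}_{s}-y^{2}_{s}|,
\]
and since $\|Q\|<1$ by the technical assumption (\ref{ta}), I can solve this to obtain $\sup_{s\leq t}|k^{1,n}_{s}-k^{2,n}_{s}|\leq(1-\|Q\|)^{-1}\sup_{s\leq t}|y^{1}_{s}-y^{2}_{s}|$. The bound on $x^{1,n}-x^{2,n}$ then follows immediately from the third line of (\ref{eq3}), which gives $x^{i,n}_{t}=y^{i,(n)}_{t}+(I-Q^{T})k^{i,n}_{t}$, and therefore $|x^{1,n}_{s}-x^{2,n}_{s}|\leq|y^{1}_{s}-y^{2}_{s}|+\|I-Q^{T}\|\,|k^{1,n}_{s}-k^{2,n}_{s}|$; combining the two estimates produces a single constant ${\cal C}$ depending only on $Q$.

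I do not expect any substantial obstacle: the three-term decomposition used in Theorem \ref{th3} collapses here to two terms because there is no continuous limit $(F,k)$ to compare against, and both remaining terms are handled by exactly the same Lipschitz estimates on $[\,\cdot\,]^{+}$ and $Q^{T}$. The only point requiring mild care is the handling of the shifted sequence $k^{n,(n-)}$, whose supremum on $[0,t]$ is controlled by that of $k^{n}$ because its values on $[0,t]$ are a subset of the values of $k^{n}$ on $[0,t]$ together with the initial zero.
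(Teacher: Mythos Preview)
Your argument is correct and is essentially the paper's own proof: the paper likewise reduces to the $k$-estimate, writes $k^{i,n}$ via the operator $F^{n}$ of Remark~\ref{rm4}, applies the $1$-Lipschitz property of $[\,\cdot\,]^{+}$ to get $\|Q^{T}\|\sup_{s\leq t}|k^{1,n}_{s}-k^{2,n}_{s}|+\sup_{s\leq t}|y^{1}_{s}-y^{2}_{s}|$, and solves using $\|Q\|<1$. The only cosmetic difference is that you insert the intermediate term $F^{1,n}(k^{2,n,(n-)})$ explicitly, whereas the paper bounds the difference in one line; the content is identical.
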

\begin{proof}
Like in previous theorem we only need to proof first term of
theorem. The second we obtain from (\ref{eq3}).
\begin{eqnarray*} \sup_{s\leq t}|k^{1,n}_{s} -k^{2,n}_{s} |&=&
\sup_{s\leq t} |F^n(k^{1,n,(n-)})_{s}-F^n(k^{2,n,(n-)})_s|\\ &=&
\sup_{s\leq t} |[Q^{T} k^{1,n,(n-)}_{s}-y^{1,(n)}_{s}]^{+}-[Q^{T}
k^{2,n,(n-)}_{s}-y^{2,(n)}_{s}]^{+} |\\ &\leq& \|Q^{T}\|
\max_{\frac{i}{n}\leq t}|k^{1,n}_{(i-1)/n}-k^{2,n}_{(i-1)/n}|
+\max_{\frac{i}{n}\leq t}|y^{1}_{i/n}-y^{2}_{i/n}| \\ &\leq&
\|Q^{T}\| \sup_{s\leq t}|k^{1,n}_{s}-k^{2,n}_{s}| +\sup_{s\leq
t}|y^{1}_{s}-y^{2}_{s}|
\end{eqnarray*}
and
\[
\sup_{s\leq t}|k^{1,n}_{s} -k^{2,n}_{s} |\leq
\frac{1}{1-\|Q^{T}\|}\sup_{s\leq t}|y^{1}_{s}-y^{2}_{s}| .\]
\end{proof}

Easy corollary:
\begin{corollary}
There exists a constant ${\cal C}>0$ such that for every
$y\in\mathbb{D}(\mathbb{R}_{+}\,,\,\mathbb{R}^{d})$,
$y_{0}\in\mathbb{R}_{+}^{d}$:
\[
 k^{n}_{t}\leq {\cal C} \sup_{s\leq t} |y_{s}|<
+\infty.
\]
\end{corollary}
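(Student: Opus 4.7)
The plan is to apply Theorem \ref{th4} with the pair $y^{1}:=y$ and $y^{2}\equiv 0$. Since $0\in\mathbb{R}_{+}^{d}$, the hypotheses of Theorem \ref{th4} are satisfied, so it remains only to identify $k^{2,n}$ for the zero input and then invoke the theorem.

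First I would verify by induction on the recursion (\ref{eq3}) that when $y^{2}\equiv 0$ the scheme produces $k^{2,n}_{t}\equiv 0$ and $x^{2,n}_{t}\equiv 0$: the base case is $k^{2,n}_{0}=0$, $x^{2,n}_{0}=y^{2}_{0}=0$, and the inductive step reduces to
\[
k^{2,n}_{(i+1)/n}=[Q^{T}\cdot 0-0]^{+}\vee 0=0,
\]
from which $x^{2,n}_{(i+1)/n}=0$ as well.

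Plugging this into Theorem \ref{th4} then gives
\[
\sup_{s\leq t}|k^{n}_{s}|\leq {\cal C}\sup_{s\leq t}|y_{s}-0|={\cal C}\sup_{s\leq t}|y_{s}|,
\]
and in particular the componentwise pointwise bound $k^{n}_{t}\leq {\cal C}\sup_{s\leq t}|y_{s}|$ claimed in the corollary. Finiteness of the right-hand side is automatic, since every $y\in\mathbb{D}(\mathbb{R}_{+},\mathbb{R}^{d})$ is bounded on each compact interval $[0,t]$.

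There is essentially no obstacle here: the corollary is a two-line consequence of Theorem \ref{th4} once one observes the trivial zero-input case of the scheme. The only point deserving attention is confirming that the constant $\cal C$ delivered by Theorem \ref{th4} depends only on $Q$ (via $\|Q\|$), so that the same constant works uniformly for all admissible $y$.
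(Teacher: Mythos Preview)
Your proposal is correct and is exactly the intended argument: the paper gives no proof at all, labeling the statement an ``easy corollary'' immediately after Theorem~\ref{th4}, so the implicit proof is precisely the specialization $y^{2}\equiv 0$ that you carry out. Your extra care in checking $k^{2,n}\equiv 0$ from the recursion and in noting that ${\cal C}$ depends only on $Q$ is entirely appropriate.
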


From previous Theorems we can obtain convergent for continous
functions.

In the next lemma and theorem we formalize this observation.

\begin{lemma}
\label{lem2} Let
$y\in\mathbb{D}(\mathbb{R}_{+}\,,\,\mathbb{R}^{d})$,
$y_{0}\in\mathbb{R}_{+}^{d}$ has the form
\begin{equation}
\label{lm9} y_{t}=\sum_{i=0}^{+\infty} y_{t_{i}}{\bf
1}_{[t_{i},t_{i+1})}(t),
\end{equation}
 where $0=t_{0}<t_{1}<\ldots$, then:
\begin{equation}
\label{lm9t} x^{n}_{t}\mathop{\longrightarrow}_{n\rightarrow\infty}
x_{t} \end{equation}
 for $t\neq t_{i}$, $i\in{\mathbb N}$, where
$(x_{t},k_{t})$ is a solution of the Skorokhod Problem for $y_{t}$.
\end{lemma}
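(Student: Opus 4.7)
The plan is to unroll the recursion (\ref{eq5}) across each inter-jump interval and realize $x^n_t$ as an iterate of the one-step map $\Phi(v) := v + (I - Q^T)[-v]^+$, which is precisely the Picard iteration associated with the Skorokhod projection $\Pi_Q$ of Appendix~A. Fix $t \neq t_i$ for every $i$ and pick $N$ with $t_N < t < t_{N+1}$. Set $j_i(n) := \min\{j \geq 0 : j/n \geq t_i\}$; for $n$ so large that $1/n < (t - t_N) \wedge \min_{1 \leq i \leq N}(t_i - t_{i-1})$, the indices $0 = j_0(n) < j_1(n) < \cdots < j_N(n) \leq \lfloor nt \rfloor < j_{N+1}(n)$ are strictly ordered, and by (\ref{lm9}) the increment $\Delta y_{(j+1)/n}$ equals $\Delta y_{t_{i+1}}$ when $j + 1 = j_{i+1}(n)$ and vanishes for all other $(j+1)/n \leq t$. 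Hence, between grid points belonging to the same inter-jump interval, the scheme (\ref{eq5}) performs pure iteration of $\Phi$.

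Introduce $b_i^{(n)} := x^n_{(j_{i+1}(n) - 1)/n}$, the scheme's value at the last grid point before $t_{i+1}$. Unrolling (\ref{eq5}) across the interval gives the recursion
\[
b_i^{(n)} = \Phi^{\,j_{i+1}(n) - j_i(n)}\bigl(b_{i-1}^{(n)} + \Delta y_{t_i}\bigr),
\qquad j_{i+1}(n) - j_i(n) \longrightarrow +\infty.
\]
The base case $i = 0$ is immediate: $\Phi$ acts as the identity on $\mathbb{R}_+^d$ (because $[-v]^+ = 0$ for $v \geq 0$), so $x^n_{j/n} = y_0$ for every $j/n < t_1$ and thus $b_0^{(n)} = y_0 = x_{t_0}$.

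The inductive step rests on the key fact, recorded in Appendix~A, that $\Phi^m(v) \to \Pi_Q v$ as $m \to \infty$, uniformly on compact subsets of $\mathbb{R}^d$, with geometric rate controlled by $\|Q\| < 1$. Coupled with the Lipschitz continuity of $\Phi$ and $\Pi_Q$, this upgrades to
\[
\Phi^{m_n}(v_n) \longrightarrow \Pi_Q v^{*} \qquad
\mbox{whenever } v_n \to v^{*} \mbox{ and } m_n \to \infty.
\]
Applying it with $v_n = b_{i-1}^{(n)} + \Delta y_{t_i}$ and $m_n = j_{i+1}(n) - j_i(n)$, and invoking the inductive hypothesis, yields $b_i^{(n)} \to \Pi_Q(x_{t_{i-1}} + \Delta y_{t_i}) = x_{t_i}$; the last equality is the standard description of the Skorokhod solution at a jump of the step function (\ref{lm9}).

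Finally, for $t \in (t_N, t_{N+1})$ and $j = \lfloor nt \rfloor$, one has $x^n_t = x^n_{j/n} = \Phi^{\,j - j_N(n) + 1}(b_{N-1}^{(n)} + \Delta y_{t_N})$ with $j - j_N(n) \to \infty$ when $N \geq 1$, whence the same joint convergence delivers $x^n_t \to \Pi_Q(x_{t_{N-1}} + \Delta y_{t_N}) = x_{t_N} = x_t$; the case $N = 0$ is trivial since then $x^n_t = y_0 = x_t$. The main obstacle is the joint convergence $\Phi^{m_n}(v_n) \to \Pi_Q v^{*}$ with a base point that moves with $n$, which requires the geometric rate of $\Phi^m \to \Pi_Q$ from Appendix~A rather than mere pointwise convergence.
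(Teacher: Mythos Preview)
Your argument is correct and follows the same inductive strategy as the paper: identify the scheme between jump times with the Appendix~A iteration $z_{n+1}=z_n+(I-Q^T)[-z_n]^+$ and invoke Corollary~\ref{cor26} to pass to $\Pi_Q$. The paper's proof is terser and in one respect less careful: it writes the starting point of each inter-jump block as $z_0=x_{t_i}+\Delta y_{t_i}$, i.e.\ the \emph{exact} value, whereas in fact the scheme enters the block at the approximate value $b_{i-1}^{(n)}+\Delta y_{t_i}$, which only converges to $x_{t_{i-1}}+\Delta y_{t_i}$ by the induction hypothesis. You correctly isolate this as the only nontrivial point and supply the missing joint-convergence statement $\Phi^{m_n}(v_n)\to\Pi_Q v^*$ whenever $v_n\to v^*$ and $m_n\to\infty$. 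That statement is indeed available from Appendix~A: via Lemma~\ref{lem1} one has $\Phi^m=\bar z_m$, and the contraction $|\bar r_{m+1}(v)-\bar r_{m+1}(v')|\le\|Q\|\,|\bar r_m(v)-\bar r_m(v')|+|v-v'|$ gives a Lipschitz bound on $\Phi^m$ that is uniform in $m$ (this is also a special case of Theorem~\ref{th4}), which combined with the geometric rate $|\bar r_m-\bar r|\le\|Q\|^m|\bar r|$ yields exactly what you need. So your proof is the paper's argument made rigorous at the one place where the paper is sketchy.
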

\begin{proof}
We proof lemma "by induction". It's well known's that, if
 $y$ is of the form (\ref{lm9}) then:
\[
x_{t}= \left\{
 \begin{array}{ll}
 y_{0}; & t\in[0,t_{1})\\
 \Pi_{Q}(x_{t_{i-1}}+\Delta y_{t_{i}});& t\in[t_{i},t_{i+1}),\;
 i\in{\mathbb N}.
 \end{array} \right.
\]

\begin{description}
\item{1.} So for $t\in[0,t_{1})$ thesis is satisfy by definition.
\item{2.} By scheme (\ref{eq5}):
\[
x^{n}_{(i+1)/n}= \left\{
\begin{array}{ll}
x^{n}_{i/n}+\Delta y_{(i+1)/n}+(I-Q^{T})&[-x^{n}_{i/n}-\Delta
y_{(i+1)/n}]^{+}, \\
 & \mbox{ for } i \mbox{ such that } \frac{i}{n}\leq t_{i}<\frac{i+1}{n}\\
 & \\
x^{n}_{i/n}+(I-Q^{T})[-x^{n}_{i/n}]^{+},& \mbox{ for } i \mbox{ such
that } t_{i}<\frac{i}{n}< t_{i+1}
\end{array}
 \right.
\]

Between jumps in points $t_{i}$ sequence $x_{i}^{n}=x_{i/n}^{n}$ has
form like $z_{i}$ (see from Appendix A (\ref{lab20})) starting from
$z_{0}=(x_{t_{i}}+\Delta y_{t_{i}})$.

Then for $n\rightarrow +\infty$ from  Corollary \ref{cor26}:
$\lim_{n\rightarrow+\infty}x^{n}_{t}= \Pi_{Q}(x_{t_{i}}+\Delta
y_{t_{i}})\;\;\mbox{ for } t\in(t_{i},t_{i+1}).$
\end{description}
\end{proof}

In the next example we show that (\ref{lm9t}) can't be streightend
to the covergent in the Skorokhod topology $J_{1}$.

\begin{example}
\label{ex1}
{\em Let $d=2$, \[Q=
 \left\{
  \begin{array}{cc}
  0          & \frac{1}{2}\\
  \frac{1}{2}& 0
  \end{array}
 \right\}\mbox{ and }y_{t}=
 \left\{
  \begin{array}{cc}
   ( 0, 0)^T;&\;\; t<1\\
   (-1,-1)^T;&\;\; t\geq 1
  \end{array}
 \right..
\]
The solution of Skorokhod problem are functions:
\[x_{t}=(0,0)^T\;\;\; t\in\mathbb{R}_{+}\mbox{ and } k_{t}=\left\{
  \begin{array}{cc}
   (0,0)^T;&\;\; t<1\\
   (2,2)^T;&\;\; t\geq 1 .
  \end{array}
 \right.\]
 }

{\em  Now, we use scheme (\ref{eq3}) for this function and try to
find the limit of $(x^{n},k^{n})$ when $n$ tends to infinity. }

{\em
 When we use scheme (\ref{eq3}) we obtain:
\[k^{n}_{t}=\left\{
 \begin{array}{ll}
 (              0,              0)^T; &\;\; t<1\\
 (1,1)^T; &\;\; t\in[1,1+\frac{1}{n})\\
 (2-\frac{1}{2^i},2-\frac{1}{2^i})^T; &\;\; t\in[1+\frac{i}{n},1+\frac{i+1}{n}),\;\ i\in{\mathbb N}
 \end{array}
\right.\] and
\[x^{n}_{t}=\left\{
 \begin{array}{ll}
  (                 0,                 0)^T; &\;\; t<1\\
  (-\frac{1}{2},-\frac{1}{2})^T; &\;\; t\in[1,1+\frac{1}{n})\\
  (-\frac{1}{2^{i+1}},-\frac{1}{2^{i+1}})^T; &\;\; t\in[1+\frac{i}{n},1+\frac{i+1}{n}),\;\ i\in{\mathbb N}
 \end{array}
\right..\]

Since $\sup_{t\leq 2}|x^{n}_{t}|=\frac{1}{2}$ the solution
$x^{n}\not\longrightarrow x$ in $J_{1}$. }
\end{example}

\begin{corollary}
\label{rem1} If $y$ satisfy assumption of Lemma \ref{lem2}, then:
\begin{equation}
(x^{n},k^{n})\longrightarrow(x,k)\mbox{ in }
(\mathbb{D}(\mathbb{R}_{+}\,,\,{\mathbb R}^{2d}), S)
\end{equation}
\end{corollary}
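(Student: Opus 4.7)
My plan is to reduce the joint $(x^n,k^n)$ convergence to the $S$-convergence of $k^n$, since the defining scheme (\ref{eq3}) gives, on each dyadic interval $[i/n,(i+1)/n)$, the identity
\[
x^n_t = y^{(n)}_t+(I-Q^T)k^n_t,
\]
and $y^{(n)}$ is merely the left-shift discretization of $y$. Because $y$ itself is the step function (\ref{lm9}) with isolated jumps at the $t_i$, we have $y^{(n)}\to y$ uniformly on every compact set avoiding a neighbourhood of $\{t_i\}$; in particular $y^{(n)}\to y$ in the Skorokhod $J_1$ topology.

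The core of the proof is then to show $k^n\to k$ in $S$. I would work coordinate by coordinate and exploit three inputs. First, each $k^{n,j}$ is \emph{non-decreasing} in $t$: by (\ref{eq3}), $k^{n,j}_{(i+1)/n}$ equals $[\,Q^Tk^n_{i/n}-y_{(i+1)/n}\,]^{+,j}\vee k^{n,j}_{i/n}\geq k^{n,j}_{i/n}$, and of course $k^{n,j}_0=0$. Second, the Corollary after Theorem \ref{th4} provides a uniform bound $k^{n,j}_t\leq\mathcal{C}\sup_{s\leq t}|y_s|$ on compacts, so the total variations of the $k^{n,j}$'s are uniformly controlled on compacts. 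Third, Lemma \ref{lem2} combined with the invertibility of $I-Q^T$ (guaranteed by $\rho(Q)<1$) and the fact that $y^{(n)}_t\to y_t$ at all $t\neq t_i$ yields $k^n_t\to k_t$ at every continuity point of $k$. These are precisely the hypotheses of Jakubowski's criterion: a uniformly bounded sequence of coordinate-wise non-decreasing \emph{c\`adl\`ag} functions converging pointwise to its limit at every continuity point of the limit converges in the $S$ topology.

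To pass back to $x^n$, I would use the addition property of $S$: if $a^n\to a$ in $J_1$ and $b^n\to b$ in $S$, then $a^n+b^n\to a+b$ in $S$. Applied to $a^n=y^{(n)}$, $b^n=(I-Q^T)k^n$ (and noting that multiplication by a constant matrix is continuous in $S$), this gives $x^n\to x$ in $S$; joint convergence of the pair $(x^n,k^n)$ in $\mathbb{D}(\mathbb{R}_+,\mathbb{R}^{2d})$ with the product $S$ topology then follows from the componentwise statements.

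The delicate point I would spend most effort on is the addition step: strictly speaking $S$ is not a topological vector group, so the sum rule has to be verified in the specific situation at hand. The saving feature is that \emph{both} summands have uniformly bounded variation on compacts and converge at continuity points of the limit, and under that hypothesis the standard Helly-type arguments make the sum rule routine, which is exactly the setting Example \ref{ex1} illustrates cannot be strengthened to $J_1$.
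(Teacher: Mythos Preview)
Your argument is correct and is essentially what the paper's one-line proof (``From Jakubowski \cite{ja} Lemma 2.14'') is invoking: Lemma \ref{lem2} gives pointwise convergence off the countable set $\{t_i\}$, the corollary to Theorem \ref{th4} gives uniform variation bounds, and Jakubowski's criterion converts this into $S$-convergence. The only organisational difference is that you route through $k^n$ first and then worry about summing with $y^{(n)}$ to recover $x^n$, whereas the criterion can be applied directly to the full $2d$-vector $(x^n,k^n)$: each coordinate of $k^n$ is nondecreasing, and each coordinate of $x^n=y^{(n)}+(I-Q^T)k^n$ is a difference of nondecreasing functions (since $y^{(n)}$ is a step function and the off-diagonal entries of $Q^T$ are nonnegative), all with uniformly bounded variation on compacts and all converging at every $t\neq t_i$. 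Applying Jakubowski's lemma coordinatewise to the pair avoids your ``delicate point'' about addition in $S$ altogether.
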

\begin{proof}
From Jakubowski \cite{ja} Lemma 2.14.
\end{proof}

\begin{theorem}
If $y\in \mathbb{D}(\mathbb{R}_{+}\,,\,\mathbb{R}^{d})$ and
$y_{0}\in\mathbb{R}_{+}^{d}$, then
\begin{equation}
\label{eq306} (x^{n},k^{n})\longrightarrow(x,k)\mbox{ in }
(\mathbb{D}(\mathbb{R}_{+}\,,\,{\mathbb R}^{2d}), S)
\end{equation}
\end{theorem}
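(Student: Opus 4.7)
The plan is to reduce the general cadlag case to the piecewise-constant case of Lemma \ref{lem2} by approximating $y$ uniformly on compacts by step functions of the form (\ref{lm9}), and then propagating the approximation error through the scheme by means of the stability estimate in Theorem \ref{th4}.

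Fix $T>0$. Since $y$ is cadlag, for each $m\in\mathbb{N}$ only finitely many jumps of $y$ on $[0,T]$ exceed $1/m$ in modulus, so one can choose a finite partition $0=t_0^m<\ldots<t_{N_m}^m=T$ containing all such jump times and refined enough that the oscillation of $y$ on every $[t_i^m,t_{i+1}^m)$ is at most $2/m$. Setting $y^{(m)}_t=y_{t_i^m}$ on $[t_i^m,t_{i+1}^m)$ (extended constantly past $T$) yields a step function of the form (\ref{lm9}) with $y^{(m)}_0=y_0\in\mathbb{R}_+^d$ and
\[
\sup_{s\le T}|y_s-y^{(m)}_s|\le \frac{2}{m}.
\]
Let $(x^{(m)},k^{(m)})$ denote the exact Skorokhod solution for $y^{(m)}$, and $(x^{(m),n},k^{(m),n})$ the scheme (\ref{eq3}) run on input $y^{(m)}$.

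Three comparisons then finish the argument. First, Theorem \ref{th4} applied to $y$ and $y^{(m)}$ gives, uniformly in $n$,
\[
\sup_{s\le T}|x^n_s-x^{(m),n}_s|+\sup_{s\le T}|k^n_s-k^{(m),n}_s|\le \frac{2\mathcal{C}}{m}.
\]
Second, the analogous (classical) stability of the exact Skorokhod map under $\|Q\|<1$ provides the same bound for $(x,k)$ against $(x^{(m)},k^{(m)})$. Third, for each fixed $m$ the input $y^{(m)}$ satisfies the assumption of Lemma \ref{lem2}, so Corollary \ref{rem1} yields $(x^{(m),n},k^{(m),n})\to(x^{(m)},k^{(m)})$ in $(\mathbb{D}(\mathbb{R}_+,\mathbb{R}^{2d}),S)$ as $n\to\infty$.

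Combining the three, $(x^n,k^n)$ lies in an arbitrarily small uniform neighbourhood of $(x^{(m),n},k^{(m),n})$ and $(x,k)$ in the same uniform neighbourhood of $(x^{(m)},k^{(m)})$; letting $n\to\infty$ with $m$ fixed and then $m\to\infty$ gives (\ref{eq306}) by a diagonal argument. The main obstacle is precisely this last step, since Jakubowski's $S$-topology is not linear; however, one only needs the fact (established in \cite{ja}) that $S$-convergence is coarser than uniform convergence on compacts and is stable under addition of uniformly small perturbations, which is exactly the role played by the two uniform bounds above.
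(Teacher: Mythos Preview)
Your proposal is correct and follows essentially the same route as the paper: approximate $y$ uniformly by step functions, invoke Corollary~\ref{rem1} for the step-function input, and control the mismatch between schemes uniformly in $n$ via Theorem~\ref{th4}. Your write-up is in fact more explicit than the paper's, which records only the bound $\sup_{s\le t}|k^{\epsilon,n}_s-k^n_s|\le\mathcal{C}\epsilon$ and leaves both the stability of the exact Skorokhod map and the passage to $S$-convergence implicit.
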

\begin{proof}
For all $y\in\mathbb{D}(\mathbb{R}_{+}\,,\,\mathbb{R}^{d})$ and all
$\epsilon>0$ exist
$y^{\epsilon}\in\mathbb{D}(\mathbb{R}_{+}\,,\,\mathbb{R}^{d})$
satisfying assumption of Lemma \ref{lem2} such that $\sup_{s\leq
t}|y^{\epsilon}_{s}-y_{s}|\leq\epsilon$.

Let pair $(x^{\epsilon},k^{\epsilon})$ be a solution of the Skorokhod problem
for $y^{\epsilon}$. Then from Lemma \ref{lem2}:
\[
(x^{\epsilon,n},k^{\epsilon,n})\longrightarrow(x^{\epsilon},k^{\epsilon}))\mbox{
in } (\mathbb{D}(\mathbb{R}_{+}\,,\,{\mathbb R}^{2d}), S)
\]
to show thesis we need that:
\[
\lim_{\epsilon\longrightarrow0}\sup_{n}\sup_{s\leq
t}|k^{\epsilon,n}_{s}-k^{n}_{s}|=0
\]

From Theorem \ref{th4} we have:
\[
\sup_{s\leq t}|k^{\epsilon,n}_{s}-k^{n}_{s}|\leq{\cal C} \sup_{s\leq
t}|y^{\epsilon (n)}_{s}-y^{(n)}_{s}|\leq{\cal C}\epsilon
\]
\end{proof}
\begin{remark}
If $y\in \mathbb{D}(\mathbb{R}_{+}\,,\,\mathbb{R}^{d})$ and
$y_{0}\in\mathbb{R}_{+}^{d}$, then $ x^{n}\longrightarrow x$ for
continuity point of $y$ and $\{x^{n}\}$ is relatisvely $S$-compact.
\end{remark}

\nsubsection{Fast approximation scheme for SDE} Let $Z$ be a
$(({\cal F}_{t}))$-adapted semimartingale. Let us recall that pair
$(X,K)$ of $(({\cal F}_{t}))$-adapted processes is called {\em
strong solution} of (\ref{eq1}) if $(X,K)$ is a solution to the
Skorokhod problem associated with the semimartingale:
\begin{equation}
\label{eq501}
Y_{t}=X_0+\int_0^t\sigma(X_{s-})dZ_{s},\;\;\;t\in\mathbb{R}_{+} .
\end{equation}

\begin{remark}
If $\sigma$ is Lipschitz continuous, then there exist a unique
strong solution to the SDE (\ref{eq1}).
\end{remark}

Using formulas (\ref{eq3}) we can define "fast" scheme for SDE:
\[
\left\{
\begin{array}{rcl}
X^{n}_0&=&X_0,\;\;\;
K^{n}_0=0,\\
K^{n}_{(i+1)/n}&=&[Q^{T}K^{n}_{i/n}-(X^{n}_{i/n}+\sigma(X^{n}_{i/n})(Z_{(i+1)/n}-Z_{i/n}) ]^{+} \vee K^{n}_{i/n},\\
x^{n}_{(i+1)/n}&=&X^{n}_{i/n}+\sigma(X^{n}_{i/n})(Z_{(i+1)/n}-Z_{i/n})+(1-Q^{T})K^{n}_{(i+1)/n},\\
(X^{n}_{t},K^{n}_{t})&=&(X^{n}_{i/n},K^{n}_{i/n})\;\;\;
t\in[\frac{i}{n},\frac{i+1}{n}).
\end{array}
\right.
\]

\begin{lemma}
\label{lem17} Assume there exist stoping times
$\{\tau_{i}\}\subset\mathbb{R}_{+}$ such that:
$0=\tau_{0}<\tau_{1}<\ldots $ and $\{Z_{i}\}\subset\mathbb{R}^{d}$.
If $Z$ is semimartingale such
\[ Z_{t}=Z_{i}\]
for $ t\in[\tau_{i},\tau_{i+1}), \; i\in{\mathbb N}\cup\{0\}$ then
\begin{equation}
\label{eq503l} X^{n}_{t}\longrightarrow X_{t}\;\;\; for \; t\neq
t_{i}.
\end{equation}
\end{lemma}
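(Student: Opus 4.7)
The plan is to fix a path $\omega$ and proceed by induction on the jump index $i$, showing that $X^n_t(\omega) \to X_t(\omega)$ for every $t \in [0,\tau_{i+1}(\omega))\setminus\{\tau_j(\omega):j\le i\}$. The key structural observation is that since $Z(\omega)$ is constant on each $[\tau_j,\tau_{j+1})$, whenever the grid interval $[\ell/n,(\ell+1)/n]$ is disjoint from the jump set $\{\tau_j\}$ the driving increment $\sigma(X^n_{\ell/n})(Z_{(\ell+1)/n}-Z_{\ell/n})$ vanishes, and (\ref{eq5}) reduces to the pure projection iteration
\[
X^n_{(\ell+1)/n} = X^n_{\ell/n} + (I-Q^{T})[-X^n_{\ell/n}]^{+}.
\]
This is exactly the sequence $\{z_k\}$ of Appendix A (\ref{lab20}), whose iterates converge to $\Pi_Q$ of the starting point by Corollary \ref{cor26}.

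The base case $t\in[0,\tau_1(\omega))$ is immediate: since $X_0\in\mathbb{R}_+^{d}$, $[-X_0]^{+}=0$, so the scheme stays frozen at $X^n_{\ell/n}=X_0$, $K^n_{\ell/n}=0$, and for $n$ large this covers all grid points $\ell/n\le t$. For the inductive step, assume $X^n_s(\omega)\to X_s(\omega)$ for every $s\in[0,\tau_{i+1})\setminus\{\tau_j\}_{j\le i}$ and fix $t\in(\tau_{i+1},\tau_{i+2})$. Let $j_n$ denote the unique index with $\tau_{i+1}\in[j_n/n,(j_n+1)/n)$; for large $n$ one has $j_n/n\in[\tau_i,\tau_{i+1})$, which is a continuity point of $X$ (where $X$ equals $X_{\tau_i}$), so by the induction hypothesis $X^n_{j_n/n}\to X_{\tau_i}$. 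One application of (\ref{eq5}) at step $j_n+1$, together with Lipschitz continuity of $\sigma$, yields that $X^n_{(j_n+1)/n}$ converges to the first iterate $z_1$ of Appendix A starting from $z_0:=X_{\tau_i}+\sigma(X_{\tau_i})\Delta Z_{\tau_{i+1}}$. For subsequent grid steps up to $t$ the driver is again zero and the number of such steps $\lfloor nt\rfloor-j_n-1$ tends to infinity; combining this with Corollary \ref{cor26} and continuity of the finite projection iterates in their starting point gives
\[
X^n_t \longrightarrow \Pi_Q\bigl(X_{\tau_i}+\sigma(X_{\tau_i})\Delta Z_{\tau_{i+1}}\bigr) = X_{\tau_{i+1}} = X_t,
\]
the penultimate equality being the explicit jump formula from the proof of Lemma \ref{lem2}.

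The main obstacle is the double limit at the jump: the induction only delivers $X^n_{j_n/n}\to X_{\tau_i}$, while the Appendix A conclusion $\Pi_Q z_0$ requires the number of post-jump projection iterations to diverge. Passing these two limits simultaneously forces one to use a Lipschitz-type bound on each finite iterate of the one-step projection map as a function of its starting point; this is available from the properties of $\Pi_Q$ collected in Appendix A, and a standard diagonal choice of $n$ then closes the induction. The argument gives only pointwise convergence at continuity points of $X$, in agreement with (\ref{eq503l}); as Example \ref{ex1} already warns, Skorokhod $J_1$-convergence is not to be expected here.
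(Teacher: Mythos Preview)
Your proposal is correct and follows essentially the same approach as the paper: the paper's proof of Lemma~\ref{lem17} simply records the explicit formula $X_t=\Pi_Q(X_{\tau_{i-1}}+\sigma(X_{\tau_{i-1}})\Delta Z_{\tau_i})$ on $[\tau_i,\tau_{i+1})$ and then defers to the proof of Lemma~\ref{lem2}, which is exactly the induction-on-jump-index argument combined with the Appendix~A iteration (\ref{lab20}) and Corollary~\ref{cor26} that you spell out. Your discussion of the double limit at the jump (convergence of the pre-jump value $X^n_{j_n/n}$ together with the diverging number of post-jump projection steps) is a point the paper's own proof glosses over, so if anything you are being more careful than the original.
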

\begin{proof}
We can write formula for $X_{t}$:
\[
X_{t}=\left\{
\begin{array}{ll}
X_{0}:&t\in[0,\tau_{1})\\
\Pi_{Q}(X_{\tau_{i-1}}+\sigma(X_{\tau_{i-1}})\Delta
Z_{\tau_{i}}):&t\in[\tau_{i},\tau_{i+1}),\; i\in{\mathbb N}
\end{array}
\right.
\]
The rest of proof is the same like in Lemma \ref{lem2}. We only need
to change $\Delta y^{(n)}_{(i+1)/n}$ by
$\sigma(X_{\tau_{i-1}})\Delta Z_{\tau_{i}}$.
\end{proof}

\begin{theorem}
Assume that $\sigma$ is Lipschitz continuous, then
\begin{equation}
\label{eq503} (X^{n},K^{n})\mathop{\longrightarrow}_{\cal P}
(X,K)\;\;\; in \; (\mathbb{D}(\mathbb{R}_{+}\,,\,{\mathbb
R}^{2d}),S).
\end{equation}
\end{theorem}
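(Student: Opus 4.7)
The plan is to reduce this stochastic statement to the pathwise case by approximating the driving semimartingale $Z$ with processes of the form prescribed in Lemma \ref{lem17}, mirroring the passage from piecewise-constant $y$ (Lemma \ref{lem2}) to general c\`adl\`ag $y$ done earlier for the deterministic Skorokhod problem.

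First, for each $\epsilon>0$ I define stopping times $\tau^{\epsilon}_{0}=0$, $\tau^{\epsilon}_{k+1}=\inf\{t>\tau^{\epsilon}_{k}:|Z_{t}-Z_{\tau^{\epsilon}_{k}}|>\epsilon\}$, and set $Z^{\epsilon}_{t}=Z_{\tau^{\epsilon}_{k}}$ on $[\tau^{\epsilon}_{k},\tau^{\epsilon}_{k+1})$. Then $Z^{\epsilon}$ is a semimartingale of the form required by Lemma \ref{lem17}, and the c\`adl\`ag property of $Z$ yields $\sup_{s\leq T}|Z_{s}-Z^{\epsilon}_{s}|\longrightarrow 0$ in probability as $\epsilon\rightarrow 0$ for each finite $T$. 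Let $X^{\epsilon}$ be the strong solution of (\ref{eq1}) driven by $Z^{\epsilon}$ and $(X^{\epsilon,n},K^{\epsilon,n})$ the output of the scheme of this section but with $Z$ replaced by $Z^{\epsilon}$. Lemma \ref{lem17} gives $X^{\epsilon,n}_{t}\longrightarrow X^{\epsilon}_{t}$ a.s.\ at continuity points, while the standard pathwise stability of the Skorokhod SDE (Theorem \ref{th3} applied to $Y=X_{0}+\int_{0}^{\cdot}\sigma(X_{s-})dZ_{s}$ versus $Y^{\epsilon}=X_{0}+\int_{0}^{\cdot}\sigma(X^{\epsilon}_{s-})dZ^{\epsilon}_{s}$, together with BDG and Gronwall) gives $X^{\epsilon}\longrightarrow X$ in probability in $(\mathbb{D}(\mathbb{R}_{+},\mathbb{R}^{d}),S)$ as $\epsilon\rightarrow 0$.

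The crucial step is the uniformity in $n$: to show that $\lim_{\epsilon\rightarrow 0}\sup_{n}P(\sup_{s\leq T}|X^{n}_{s}-X^{\epsilon,n}_{s}|>\delta)=0$ for every $\delta>0$. For this, set $\tilde Y^{n}_{t}=X_{0}+\sum_{i/n\leq t}\sigma(X^{n}_{(i-1)/n})(Z_{i/n}-Z_{(i-1)/n})$, piecewise constant on $[i/n,(i+1)/n)$, and analogously $\tilde Y^{\epsilon,n}$ using $Z^{\epsilon}$. A direct induction using the equivalent form (\ref{eq4})--(\ref{eq5}) shows that $(X^{n},K^{n})$ is precisely the deterministic scheme (\ref{eq3}) applied to $\tilde Y^{n}$, and similarly for $(X^{\epsilon,n},K^{\epsilon,n})$. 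Theorem \ref{th4} therefore yields
\[
\sup_{s\leq T}|X^{n}_{s}-X^{\epsilon,n}_{s}|+\sup_{s\leq T}|K^{n}_{s}-K^{\epsilon,n}_{s}|\leq{\cal C}\sup_{s\leq T}|\tilde Y^{n}_{s}-\tilde Y^{\epsilon,n}_{s}|.
\]
Splitting $\tilde Y^{n}-\tilde Y^{\epsilon,n}$ into the sum over $i$ of $[\sigma(X^{n}_{(i-1)/n})-\sigma(X^{\epsilon,n}_{(i-1)/n})](Z_{i/n}-Z_{(i-1)/n})$ and the sum of $\sigma(X^{\epsilon,n}_{(i-1)/n})((Z-Z^{\epsilon})_{i/n}-(Z-Z^{\epsilon})_{(i-1)/n})$, and invoking Lipschitz continuity of $\sigma$ together with Doob/BDG for the local-martingale part of $Z$ and total-variation estimates for its finite-variation part (after localizing both $[Z,Z]_{T}$ and the variation to make them bounded), a discrete Gronwall inequality produces $E\sup_{s\leq T\wedge\tau_{M}}|\tilde Y^{n}_{s}-\tilde Y^{\epsilon,n}_{s}|^{2}\leq C(M,T)\,E\sup_{s\leq T}|Z_{s}-Z^{\epsilon}_{s}|^{2}$ with $C(M,T)$ independent of $n$.

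Combining the three pieces by the triangle inequality finishes the argument: given $\delta>0$, choose $\epsilon$ small to make both $|X-X^{\epsilon}|$ and $\sup_{n}|X^{n}-X^{\epsilon,n}|$ smaller than $\delta/3$ on a set of probability at least $1-\delta$, then $n$ large so that Lemma \ref{lem17} gives $|X^{\epsilon,n}_{t}-X^{\epsilon}_{t}|<\delta/3$ at continuity points; the same reasoning handles $K^{n}$. The required relative $S$-compactness of $\{(X^{n},K^{n})\}$, needed to promote pointwise convergence to $(\mathbb{D},S)$-convergence via Jakubowski's Lemma 2.14 exactly as in the deterministic Theorem (\ref{eq306}), follows from the uniform bound $K^{n}_{t}\leq{\cal C}\sup_{s\leq t}|\tilde Y^{n}_{s}|$ (the Corollary after Theorem \ref{th4}). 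The main obstacle is the uniform-in-$n$ stability estimate of the middle paragraph: it requires combining the pathwise Theorem \ref{th4} with the standard semimartingale machinery in a way sensitive to the fact that $\tilde Y^{n}$ itself depends on $X^{n}$, which forces the appeal to a Gronwall argument rather than a one-shot inequality.
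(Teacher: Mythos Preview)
Your architecture matches the paper's: approximate $Z$ by a piecewise-constant $Z^{\epsilon}$, invoke Lemma~\ref{lem17}, then establish $\lim_{\epsilon\to 0}\limsup_n P(\sup_{s\leq T}|X^{n}_s - X^{\epsilon,n}_s|>\eta)=0$ via Theorem~\ref{th4} applied to $\tilde Y^{n},\tilde Y^{\epsilon,n}$. The gap is in your treatment of the second split term, where you claim that Doob/BDG plus localization yields an estimate of the form $E\sup|\ldots|^{2}\leq C\,E\sup_s|Z_s-Z^{\epsilon}_s|^{2}$. But BDG controls $\int\sigma(X^{\epsilon,n}_{s-})\,d(Z-Z^{\epsilon})_s$ through $\int|\sigma|^{2}\,d[Z-Z^{\epsilon}]$, and the quadratic variation $[Z-Z^{\epsilon}]$ does \emph{not} vanish as $\epsilon\to 0$: already for $Z=W$ a Brownian motion, $Z^{\epsilon}$ is pure-jump, so $[Z-Z^{\epsilon}]_t=[W]_t+[Z^{\epsilon}]_t\geq t$ for every $\epsilon$. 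Smallness of $\sup_s|Z_s-Z^{\epsilon}_s|$ is irrelevant to BDG, and neither localization nor total-variation estimates on the finite-variation part of $Z$ repair this, since the obstruction lives in the martingale part.

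The paper treats this term by integration by parts,
\[
\int_0^{t}\sigma(X^{\epsilon,n}_{s-})\,d(Z^{(n)}-Z^{\epsilon,(n)})_s
=\sigma(X^{\epsilon,n}_t)(Z^{(n)}_t-Z^{\epsilon,(n)}_t)
-\int_0^{t}(Z^{(n)}_{s-}-Z^{\epsilon,(n)}_{s-})\,d\sigma(X^{\epsilon,n}_s)
-[\sigma(X^{\epsilon,n}),Z^{(n)}-Z^{\epsilon,(n)}]_t,
\]
which moves $Z-Z^{\epsilon}$ from integrator to integrand, so that the uniform bound $\sup_s|Z_s-Z^{\epsilon}_s|\leq\epsilon$ becomes usable. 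The remaining integral and covariation are then controlled by showing that $\{\sigma(X^{\epsilon,n})\}_n$ is a UT family of semimartingales; the paper obtains this from boundedness in probability of $\{X^{\epsilon,n}\}$ (via Gronwall) and of $\{K^{\epsilon,n}\}$, together with $\sigma\in C^{2}$. Your scaffolding is correct, but the decisive device---integration by parts plus the UT condition, rather than BDG---is absent, and without it the uniform-in-$n$ estimate cannot be closed for a general driving semimartingale $Z$.
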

\begin{proof}
\[ \forall_{\epsilon >0} \exists_{Z^{\epsilon}} \sup_{s\leq t}|Z_{s}-Z_{s}^{\epsilon}|\leq \epsilon \]
From Lemma \ref{lem17}
\[({X^{\epsilon}}^{n},{K^{\epsilon}}^{n})\longrightarrow ({X^{\epsilon}},{K^{\epsilon}})\;\;\; in (\mathbb{D}(\mathbb{R}_{+}\,,\,{\mathbb R}^{2d}),S) \]

To proof we need:
\[ \lim_{\epsilon\rightarrow 0}\limsup_{n}P(\sup_{s\leq t}|{X^{\epsilon}_{s}}^{n}-{X}_{s}^{n}|>\eta)= 0 \;\;\;\forall_{\eta>0}\]

\begin{eqnarray*}
\sup_{s\leq t}|{X^{\epsilon}_{s}}^{n}-{X}_{s}^{n}|&\leq&{\cal C}\sup_{s\leq t}|{Y^{\epsilon}_{s}}^{n}-{Y}_{s}^{n}|\\
&=&{\cal C}\sup_{s\leq t}|\int_{0}^{s}\sigma({X}_{u-}^{n})dZ_{u}^{(n)}-\int_{0}^{s}\sigma({X^{\epsilon}_{u}}^{n}_{-})dZ_{u}^{\epsilon,(n)}|\\
&\leq&{\cal C}\sup_{s\leq t}|\int_{0}^{s}\sigma({X}_{u-}^{n})dZ_{u}^{(n)}-\int_{0}^{s}\sigma({X^{\epsilon}_{u}}^{n}_{-})dZ_{u}^{(n)}|\\
&+&{\cal C}\sup_{s\leq t}|\int_{0}^{s}\sigma({X^{\epsilon}_{u}}^{n}_{-})dZ_{u}^{(n)}-\int_{0}^{s}\sigma({X^{\epsilon}_{u}}^{n}_{-})dZ_{u}^{\epsilon,(n)}|\\
&=&{\cal C}\sup_{s\leq
t}|\int_{0}^{s}(\sigma({X}_{u-}^{n})-\sigma({X^{\epsilon}_{u}}^{n}_{-}))dZ_{u}^{(n)}|+{\cal
C}\sup_{s\leq t} |H_{s}^{\epsilon,n}|
\end{eqnarray*}
\begin{eqnarray*}
H_{t}^{\epsilon,n}&=&\int_{0}^{t}\sigma({X^{\epsilon}_{s}}^{n}_{-})dZ_{s}^{(n)}-\int_{0}^{t}\sigma({X^{\epsilon}_{s}}^{n}_{-})dZ_{s}^{\epsilon,(n)}\\
&=&\int_{0}^{t}\sigma({X^{\epsilon}_{s}}^{n}_{-})d(Z_{s}^{(n)}-Z_{s}^{\epsilon,(n)})\\
&=&\sigma({X^{\epsilon}_{t}}^{n})(Z^{(n)}_{t}-Z^{\epsilon,(n)}_{t})-\int_{0}^{t}(Z^{(n)}_{s-}-Z^{\epsilon,(n)}_{s-})d\sigma({X^{\epsilon}_{s}}^{n})\\
&&-[\sigma({X^{\epsilon}_{t}}^{n}),(Z^{(n)}_{t}-Z^{\epsilon,(n)}_{t})]
\end{eqnarray*}
\[
[\sigma({X^{\epsilon}_{t}}^{n}),(Z^{(n)}_{t}-Z^{\epsilon,(n)}_{t})]\leq
([\sigma({X^{\epsilon}_{t}}^{n})])^{1/2}([(Z^{(n)}_{t}-Z^{\epsilon,(n)}_{t})])^{1/2}
\]

\[{X^{\epsilon}_{t}}^{n}=X_{0}+\int_{0}^{t}\sigma ( {X^{\epsilon_{s-}^n}})dZ^{\epsilon,n}_{s}
+ (1-Q^{T}){K^{\epsilon}_{t}}^{n} \]

\begin{eqnarray*}
\sup_{s\leq t}|{X^{\epsilon}_{t}}^{n}| &\leq& |X_{0}|+\sup_{s\leq t}
|\int_{0}^{t}\sigma({X^{\epsilon_{u-}}}^n})dZ^{\epsilon,n}_{u}| +
(1-Q^{T})\sup_{s\leq t}|{K^{\epsilon_{s}^{n}}| \\ &\leq&
|X_{0}|+2{\cal C} \sup_{s\leq
t}|\int_{0}^{t}\sigma({X^{\epsilon_{u-}^n}})dZ^{\epsilon,n}_{u}|
\end{eqnarray*}

From Gronwall lemma we obtain, that $\{\sup|{X^{\epsilon^{n}}}|\}$
is bounded for $\sigma$ satisfying Lipshitz condition. So, if
$\{\sigma(|{X^{\epsilon^{n}}})\}$ is bounded in probability, then
\[ \int_{0}^{t}\sigma({X^{\epsilon_{s-}}}^{n})dZ^{\epsilon,n}_{s}
\]
satisfies $UT$ condition.

Because $\{K^{\epsilon^{n}}\}$ is bounded in probability, this means
that it also satysfies $UT$. $\{{X^{\epsilon^{n}}}\}$ satisfies $UT$
as a sum of two proceses that satisfy $UT$. So,
$\sigma({X^{\epsilon^{n}}})$ satisfies $UT$ for $\sigma \in C^{2}$.
\end{proof}

\nsubsection{Fast approximation scheme for diffusion} \label{fasfd}
Consider SDE with reflection on $\mathbb{R}_{+}^{d}$ of the form
\begin{equation}
\label{eq601}
X_t=X_0+\int_0^tb(X_s)ds+\int_0^t\sigma(X_s)dW_s+(1-Q^{T})K_t,
\end{equation}
where $W$ is $d$\--dimension Wiener process,
 and $b:\mathbb{R}^{d}\longrightarrow\mathbb{R}^{d}$,
$\sigma:\mathbb{R}^{d}\longrightarrow\mathbb{R}^{d}\otimes\mathbb{R}^{d}$.

\begin{remark}
If $b$ and $\sigma$ are Lipschitz continuous then  there exists a
unique strong solution of the SDE (\ref{eq601}).
\end{remark}

Let us define:
\[
\left\{
\begin{array}{r@{=}l}
X^{n}_0&X_0,\;\;\; K^{n}_0=0,\\
K^{n}_{(i+1)/n}&[Q^{T}K^{n}_{i/n}-(X^{n}_{i/n}+b(X^{n}_{i/n})\frac{1}{n}+\sigma(X^{n}_{i/n})(W_{(i+1)/n}-W_{i/n})]^+\vee
K^{n}_{i/n},\\
x^{n}_{(i+1)/n}&X^{n}_{i/n}+b(X^{n}_{i/n})\frac{1}{n}+\sigma(X^{n}_{i/n})(W_{(i+1)/n}-W_{i/n})+(1-Q^{T})K^{n}_{(i+1)/n},\\
\left(X^{n}_{t}, K^{n}_{t}\right)&\left(X^{n}_{i/n},
K^{n}_{i/n}\right)\;\;\; t\in [\frac{i}{n},\frac{i+1}{n}).
\end{array}
\right.
\]

We can see that $X^{n}$ satisfies equation:
\begin{equation}
\label{eq603} X^{n}_{t}=X^{n}_0+\int_0^t
b(X^{n}_{s-})d\rho^{n}_{s}+\int_0^t\sigma(X^{n}_{s-})dW^{(n)}_{s}+(1-Q^{T})K^{n}_{t}
.
\end{equation}

\begin{theorem}
\begin{equation}
E\sup_{s\leq t} |X^{n}_{s}-X_{s}|^{2p}={\cal O}((\frac{\ln n}{n})^p)
\end{equation}
\end{theorem}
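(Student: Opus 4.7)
The plan is to compare $X^n$ and $X$ by realizing both as reflections of appropriate driving semimartingales, combining Theorems \ref{th3} and \ref{th4} with the classical Gaussian estimate $E\max_{i\le nt}|W_{i/n}-W_{(i-1)/n}|^{2p}={\cal O}((\ln n/n)^p)$, and closing the resulting inequality via Gronwall. Specifically, set
$$Y_t=X_0+\int_0^t b(X_s)\,ds+\int_0^t\sigma(X_s)\,dW_s,\qquad Y^n_t=X_0+\int_0^t b(X^n_{s-})\,d\rho^n_s+\int_0^t\sigma(X^n_{s-})\,dW^{(n)}_s,$$
so that $(X,K)$ is the exact Skorokhod solution for $Y$ while $(X^n,K^n)$ is the output of scheme (\ref{eq3}) applied to the piecewise-constant semimartingale $Y^n$. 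Letting $(\tilde X^n,\tilde K^n)$ be the exact Skorokhod solution for $Y^n$, Theorem \ref{th3} yields $\sup_{s\le t}|X^n_s-\tilde X^n_s|\le {\cal C}\,\omega_{1/n}(Y^n,[0,t])$, and the continuous Lipschitz property of the reflection map (the continuous-$y$ analogue of Theorem \ref{th4}, proved by exactly the same fixed-point argument) gives $\sup_{s\le t}|\tilde X^n_s-X_s|\le {\cal C}\sup_{s\le t}|Y^n_s-Y_s|$.

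Next I would control the $2p$-th moments of these two terms. Using BDG, linear growth of $b,\sigma$, Theorem \ref{th4} and Gronwall, one first establishes an a priori bound $\sup_n E\sup_{s\le t}|X^n_s|^{2p}<\infty$. Since $Y^n$ is piecewise constant on $\{i/n\}$,
$$\omega_{1/n}(Y^n,[0,t])=\max_{i/n\le t}\bigl|b(X^n_{(i-1)/n})/n+\sigma(X^n_{(i-1)/n})(W_{i/n}-W_{(i-1)/n})\bigr|,$$
whose $2p$-th moment is ${\cal O}(n^{-2p})+{\cal O}((\ln n/n)^p)$ by the sharp maximum-of-Gaussians bound. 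For $\sup_{s\le t}|Y^n_s-Y_s|$ I would split
$$Y^n_s-Y_s=\int_0^s(b(X^n_{u-})-b(X_u))du+\int_0^s b(X^n_{u-})(d\rho^n_u-du)+\int_0^s(\sigma(X^n_{u-})-\sigma(X_u))dW_u+\int_0^s\sigma(X^n_{u-})(dW^{(n)}_u-dW_u):$$
Lipschitz of $b$ plus Jensen controls the first contribution by $C\int_0^t E|X^n_u-X_u|^{2p}du$; the drift-discretization term is ${\cal O}(n^{-2p})$; BDG applied to the third gives again $C\int_0^t E|X^n_u-X_u|^{2p}du$; and the fourth equals $\sigma(X^n_{k/n})(W_s-W_{k/n})$ on $[k/n,(k+1)/n)$, so its supremum is bounded by $\max_k|\sigma(X^n_{k/n})|\cdot\max_k\sup_{s\in[k/n,(k+1)/n)}|W_s-W_{k/n}|$, whose $2p$-th moment is ${\cal O}((\ln n/n)^p)$ by the modulus-of-continuity estimate for $W$ at scale $1/n$.

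Putting everything together, $\phi_n(t):=E\sup_{s\le t}|X^n_s-X_s|^{2p}$ satisfies $\phi_n(t)\le C(\ln n/n)^p + C\int_0^t \phi_n(s)\,ds$, and Gronwall gives $\phi_n(t)\le C e^{Ct}(\ln n/n)^p$, as required. The main obstacle is precisely the pair of Brownian maxima $\max_i|\Delta_i W|^{2p}$ and $\max_k\sup_{s\in[k/n,(k+1)/n)}|W_s-W_{k/n}|^{2p}$: each is of order $(\ln n/n)^p$ and no better, and this is what forces the logarithmic correction in the final rate. A secondary technical point is the a priori boundedness of $\{X^n\}$ in $L^{2p}$, necessary before Lipschitz continuity can be converted into a Gronwall-ready bound; this is where Theorem \ref{th4} combined with BDG and Gronwall does the preparatory work.
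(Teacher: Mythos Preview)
Your proposal is correct and follows essentially the same strategy as the paper: establish an a priori $L^{2p}$ bound on $\{X^n\}$, split the error into a scheme-versus-exact-reflection part (controlled by $\omega_{1/n}$ of the driving process) and a driving-process-difference part (controlled via Lipschitz continuity of the reflection map and of $b,\sigma$), identify the Brownian modulus $E\sup_{s\le t}|W_s-W^{(n)}_s|^{2p}=\mathcal{O}((\ln n/n)^p)$ as the bottleneck, and close with Gronwall. The paper organizes the same estimates slightly differently---it telescopes $K^n-K$ directly via the fixed-point representations $K=F(K)$ and $K^n=F^n(K^{n,(n-)})$ (mirroring the decomposition $I^1+I^2+I^3$ of Theorem~\ref{th3}) rather than introducing your intermediate exact solution $(\tilde X^n,\tilde K^n)$---but the mathematical content and the source of the $(\ln n/n)^p$ rate are identical.
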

\begin{proof}

\begin{lemma}
\begin{equation}
\label{eq604} \sup_{n}E\sup_{s\leq t} |X^{n}_{s}|^{2p}<+\infty
\end{equation}
\end{lemma}
\begin{proof}
\begin{equation}
\sup_{s\leq t}|X^{n}_{s}-X^{n}_0|\leq {\cal C} \sup_{s\leq
t}|\int_0^s\sigma(X^{n}_{u-})dW^{(n)}_u+\int_0^sb(X^{n}_{u-})d\rho^{n}_u|
\end{equation}
From this we derive:
\[
\sup_{s\leq t}|X^{n}_{s}-X^{n}_0|^{2p}\leq 2{\cal C} \sup_{s\leq
t}|\int_0^s\sigma(X^{n}_{u-})dW^{(n)}_u|^{2p}+2{\cal C}\sup_{s\leq
t}|\int_0^sb(X^{n}_{u-})d\rho^{n}_u|^{2p}
\]
Now because $b$ and $\sigma$ are Lipschitz we have
\begin{eqnarray*}
&&E\sup_{s\leq t}|X^{n}_{s}-X^{n}_0|^{2p}\\
 &\leq& 2{\cal C} E(\int_0^t\sigma(X^{n}_{s-})dW^{(n)}_{s})^{2p}+2{\cal C} E(\int_0^t|b(X^{n}_{s}|ds)^{2p}\\
& \leq& 2{\cal C} E\int_0^t\sigma^{2p}(X^{n}_{s-})d\rho^{n}_s+2 {\cal C} E\int_0^tb^{2p}(X^{n}_{s^-})d\rho^{n}_{s}\\
& \leq& {\cal C} E\int_0^t((X^{n}_{s-})^{2p}+1)d\rho^{n}_s\\
& \leq& {\cal C}(1+\int_0^t E\sup_{u\leq s}
|X^{n}_u-X^{n}_0|^{2p}ds)
\end{eqnarray*}
From Gronwall lemma we have the thesis.
\end{proof}

\[X^{n}_{t}-X_{t}=\int_0^t(\sigma(X^{n}_{s-})-\sigma(X_{s-}))dW^{(n)}_{s}
            +\int_0^t(b(X^{n}_{s-})-b(X_{s-}))d\rho^{n}_{s} +(1-Q^{T})(K^{n}_{t}-K_{t})) \]
because $b$ and $\sigma$ are Lipschitz  then:
\[ E\sup_{s\leq t} |X^{n}_{s}-X_{s}|^{2p} \leq
   2{\cal C} (E\sup_{s\leq t}|K^{n}_s-K_s|^{2p}+\int_0^tE\sup_{u\leq s} |X^{n}_u-X_u|^{2p}ds \]
From Gronwall Lemma:
\[E\sup_{s\leq t}|X^{n}_s-X_s|^{2p}\leq 2{\cal C} E \sup_{s\leq t} |K^{n}_s-K_s|^{2p} \]
In the same way we can proof that
\[ E\sup_{s\leq t} |X^{n}_s|^2 \leq {\cal C} E \sup_{s\leq t}|K^{n}_s|^2\]
Because
\[K^{n}_t=\sup_{s\leq t} [Q^{T}K^{n,(n-)} - (X^{n}_0+\int_0^sb(X^{n}_{u-})d\rho^{n}_u+\int_0^s\sigma(X^{n}_{u-})dW^{(n)}_u)]^+\]
And
\[K_t=\sup_{s\leq t} [Q^{T}K_s-(X^{n}_0+\int_{0}^{s}b(X_{u-})d\rho_u+\int_0^s\sigma(X_{u-})dW_u)]^+\]
we have
\begin{eqnarray*}
&&K^{n}_{t}-K_t=\\
&& \sup_{s\leq t} [Q^{T}K_{s}^{n,(n-)} -(X^{n}_0+\int_0^sb(X^{n}_{u-})d\rho^{n}_u+\int_0^s\sigma(X^{n}_{u-})dW^{(n)}_u)]^+\\
&&-\sup_{s\leq t} [Q^{T}K^{n}_{s} - (X^{n}_0+\int_0^sb(X^{n}_{u-})d\rho^{n}_u+\int_0^s\sigma(X^{n}_{u-})dW^{(n)}_u)]^+\\
&&+\sup_{s\leq t} [Q^{T}K^{n}_{s} - (X^{n}_0+\int_0^sb(X^{n}_{u-})d\rho^{n}_u+\int_0^s\sigma(X^{n}_{u-})dW^{(n)}_u)]^+\\
&&-\sup_{s\leq t} [Q^{T}K^{n}_{s} - (X_0+\int_0^sb(X_{u-})d\rho_u+\int_0^s\sigma(X_{u-})dW_u)]^+\\
&&+\sup_{s\leq t} [Q^{T}K^{n}_{s} - (X_0+\int_0^sb(X_{u-})d\rho_u+\int_0^s\sigma(X_{u-})dW_u)]^+\\
&&-\sup_{s\leq t} [Q^{T}K_{s} - (X_0+\int_0^sb(X_{u-})d\rho_u+\int_0^s\sigma(X_{u-})dW_u)]^+\\
&&=I_{t}^{1}+I_{t}^{2}+I_{t}^{3}
\end{eqnarray*}

\begin{eqnarray*}
 I_{t}^{1}&\leq&\sup_{s\leq t} [K_{s}^{n,(n-)}-K^{n}_{s}|\\
    &\leq&\sup_{s\leq t}|\sigma(X^{n}_{s-})(W_s-W^{(n)}_s)+b(X^{n}_{s-})(s-\rho^{n}_s)|\\
 I_{t}^{2}&\leq&\sup_{u\leq s}|\int_0^s(b(X^{n}_{u-})-b(X_{u-}))d\rho^{n}_u+\int_0^s((\sigma(X^{n}_{u-})-(\sigma(X_{u-}))dW^{(n)}_u\\
 I_{t}^{3}&\leq&\sup_{s\leq t} |K^{n}_s-K_s|\\
 \end{eqnarray*}
and we have
\begin{eqnarray*}
&&E\sup_{s\leq t} |K_s -K^{n}_s|^{2p}\\
&\leq& {\cal C} (E\sup_{s\leq t} |W_s-W^{(n)}_s|^{2p}+ \int_0^sE\sup_{u\leq s} |K_u-K^{n}_u|^{2p}du)\\
&\leq& {\cal C} E(\omega_{\frac{1}{n}}(W, [0,t]))^{2p}\\
&=&{\cal O}((\frac{ \ln n}{n})^p)
\end{eqnarray*}
\end{proof}

\nsubsection*{Appendix A: $\Pi_{Q}$ projection}

Finding the projection $\pi$ on the domain $D$ is the standard
techniquie to obtain solution of the Skorokhod Problem. In \cite{cs}
we define projection on the orthant $\mathbb{R}_{+}^{d}$:
\begin{remark}
\label{remA7}
$\Pi_{Q}:\mathbb{R}^{d}\longrightarrow\mathbb{R}_{+}^{d}$ is defined
by formula:
\[ \Pi_{Q}(z)=z+(I-Q^{T})\bar{r}, \]
where $\bar{r}$ satisfy equation:
\[\bar{r}=[Q^{T} \bar{r} -z]^{+}. \]
\end{remark}

In that definition, we have to find "the fixed point" $\bar{r}$.
Typically, we use approximation sequence of $\bar{r}$ and $\bar{z}$:
\begin{equation}
\label{eq3.7} \left\{
\begin{array}{rcl}
\bar{r}_{0}&=&0,\\
\bar{z}_{0}&=&z,\\
\bar{r}_{n+1}&=&[Q^{T}\bar{r}_{n}-z]^{+},\;\;\;n\in{\mathbb N}\cup\{0\},\\
\bar{z}_{n+1}&=&z+(I-Q^{T})\bar{r}_{n+1}\;\;\;n\in{\mathbb
N}\cup\{0\}.
\end{array}
\right.
\end{equation}

It's easy to see that
\[ \lim_{n\longrightarrow+\infty}\bar{r}_n=\bar{r} \]
and
\[\lim_{n\longrightarrow+\infty}\bar{z}_{n}=\Pi_{Q}(z).\]

Using simple calculation, we can obtain equivalent formula for
$\bar{r}_{n+1}$:
\begin{remark}
\label{rem6}
\[\bar{r}_{n+1}=[Q^{T}\bar{r}_{n}-z]^{+}=[-(z+(I-Q^{T})\bar{r}_{n})+\bar{r}_{n}]^{+}=[\bar{z}_{n}+\bar{r}_{n}]^{+}\]
\end{remark}

Now we define another sequence starting from the same point:
\begin{equation}
\label{lab20}
\left\{\begin{array}{rcl} z_{0}&=&z,\\
z_{n+1}&=&z_{n}+(I-Q^{T})[-z_{n}]^{+}\;\;\;n\in{\mathbb N}\cup\{0\}.\\
\end{array}\right.
\end{equation}

Once again simple calculation lead to obtain an equivalent formula:
\begin{remark}
\label{rem7}
\[z_{n+1}=z_{n}+(I-Q^{T})[-z_{n}]^{+}=z+(I-Q^{T})\sum_{i=0}^{n}[-z_{i}]^{+} \]
\end{remark}

Sequences $z_{n}$ and $\bar{z}_{n}$ look diffrent, but in fact thwy
are only diffrent representation of the same sequence.

\begin{lemma}
\label{lem1}
\begin{equation}
\forall_{z\in\mathbb{R}^{d}}\;\forall_{n\in{\mathbb
N}\cup\{0\}}\;\;\; z_{n}=\bar{z}_{n}
\end{equation}
\end{lemma}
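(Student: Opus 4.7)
I would proceed by induction on $n$. The base case $z_0=z=\bar z_0$ is immediate from the definitions.

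For the inductive step the idea is to compare the two one--step increments. By Remark \ref{rem7},
\[
z_{n+1}-z_n=(I-Q^T)[-z_n]^+,
\]
while by definition of $\bar z_n$,
\[
\bar z_{n+1}-\bar z_n=(I-Q^T)(\bar r_{n+1}-\bar r_n).
\]
Since $\|Q\|<1$ the matrix $I-Q^T$ is invertible, so under the inductive hypothesis $z_n=\bar z_n$ the desired equality $z_{n+1}=\bar z_{n+1}$ is equivalent to $\bar r_{n+1}-\bar r_n=[-z_n]^+$. Invoking Remark \ref{rem6}, which (after correcting the sign) rewrites $\bar r_{n+1}=[\bar r_n-\bar z_n]^+$, this reduces, coordinate by coordinate, to the identity
\[
[\bar r_n^j - z_n^j]^+ - \bar r_n^j = [-z_n^j]^+ \qquad (j=1,\dots,d).
\]

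The main obstacle is that this coordinate identity is not an algebraic tautology: it fails precisely when $\bar r_n^j>0$ and $z_n^j>0$ hold simultaneously. Accordingly, I would strengthen the induction and simultaneously maintain the auxiliary property
\[
(\ast)\qquad \bar r_n^j>0\ \Longrightarrow\ z_n^j\le 0\qquad\text{for every } j.
\]
Under $(\ast)$ each coordinate falls into one of two harmless cases --- either $\bar r_n^j=0$, in which case both sides equal $[-z_n^j]^+$, or $z_n^j\le 0$, in which case both sides equal $-z_n^j$. The coordinate identity is therefore verified, yielding $z_{n+1}=\bar z_{n+1}$.

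It remains to propagate $(\ast)$ to step $n+1$. Suppose $\bar r_{n+1}^j>0$. If $\bar r_n^j=0$, then $\bar r_{n+1}^j=[-z_n^j]^+>0$ forces $z_n^j<0$; if $\bar r_n^j>0$, then $(\ast)$ at step $n$ again gives $z_n^j\le 0$. In either case $[z_n^j]^+=0$, so using the scalar identity $z_n^j+[-z_n^j]^+=[z_n^j]^+$ the recursion for $z_{n+1}$ yields
\[
z_{n+1}^j=[z_n^j]^+-\bigl(Q^T[-z_n]^+\bigr)^j=-\bigl(Q^T[-z_n]^+\bigr)^j\le 0,
\]
since $Q$ has nonnegative entries. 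This closes the induction and establishes $z_n=\bar z_n$ for all $n$.
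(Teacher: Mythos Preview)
Your proof is correct and follows essentially the same route as the paper: both reduce the inductive step to the coordinatewise vanishing of $[\bar r_n^{\,j}-\bar z_n^{\,j}]^{+}-\bar r_n^{\,j}-[-\bar z_n^{\,j}]^{+}$, and both hinge on the incompatibility of $\bar r_n^{\,j}>0$ with $z_n^{\,j}>0$. You package this as a strengthened forward induction via the auxiliary invariant $(\ast)$, whereas the paper establishes the contrapositive ($\bar z_n^{\,j}>0\Rightarrow\bar r_n^{\,j}=0$) by chaining the inequality $z_i^{\,j}\le z_{i-1}^{\,j}+[-z_{i-1}^{\,j}]^{+}$ backwards to $i=0$; the two organizations are equivalent.
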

\begin{proof}
The proof will be done using induction. For $n=0$ we have:
\[z_{0}=z=\bar{z}_{0}.\]
Now assume that $z_{i}=\bar{z}_{i}$ for $i=0,\ldots,n$. Then from
(\ref{eq3.7}) and Remark \ref{rem6} we have:
\begin{equation}
\label{lab313} \bar{r}_{i}=\bar{r}_{i-1}+[-\bar{z}_{i-1}]^{+}
\end{equation}
for $i=0,\ldots,n$.

Now we check:
\begin{eqnarray*}
\bar{z}_{n+1}-z_{n+1}&=&z+(I-Q^{T})\bar{r}_{n+1}-z_{n}+(I-Q^{T})\;[-z_{n}]^{+}\\
&=&\bar{z}_{n}+(I-Q^{T})\;([-\bar{z}_{n}+\bar{r}_i]^{+}-\bar{r}_{n})-z_{n}+(I-Q^{T})\;[-z_{n}]^{+}\\
&=&(\bar{z}_{n}-z_{n})+(I-Q^{T})\;([-\bar{z}_{n}+\bar{r}_{n}]^{+}-\bar{r}_{n}-[-z_{n}]^{+})\\
&=&(I-Q^{T})\;([-\bar{z}_{n}+\bar{r}_{n}]^{+}-\bar{r}_{n}-[-\bar{z}_{n}]^{+})
\end{eqnarray*}
Let us define:
\begin{equation}
\label{lab311}
R_{n}^{j}=[-\bar{z}_{n}^{j}+\bar{r}_{n}^{j}]^{+}-\bar{r}_{n}^{j}-[-\bar{z}_{n}^{j}]^{+},\;\;\;
j=1,\ldots,d.
\end{equation}
It is easy to check that if $\bar{z}_{n}^{j}\leq 0$ then
$R_{n}^{j}=0$. To finish the proof we need to check whether:
$R_{n}^{j}=0$ when $\bar{z}_{n}^{j}>0$.

Without the loss of generality, we can assume that $j=1$. Then:
\begin{eqnarray*}
\bar{z}_{n}^{1}&=&z_{n}^{1}\\
&=&z_{n-1}^{1}+[-z_{n-1}^{1}]^{+}-q_{21}[-z_{n-1}^{2}]^{+}+\ldots
-q_{d1}[-z_{n-1}^{d}]^{+}\\
&\leq&z_{n-1}^{1}+[-z_{n-1}^{1}]^{+}\\
&=&\bar{z}_{n-1}^{1}+[-\bar{z}_{n-1}^{1}]^{+}
\end{eqnarray*}
So, if $\bar{z}_{n}^{1}>0$ then $\bar{z}_{n-1}^{1}>0$. In the same
way we can proof that $\bar{z}_{i}^{1}>0$ for $i=n-1, \ldots, 0$. If
$\bar{z}_{0}^{1}>0$ then $\bar{r}_{1}^{1}=0$ and by (\ref{lab313})
we have that $\bar{r}_{n}^{1}=0$. So $R_{n}^{1}=0$.
\end{proof}

\begin{corollary}
\label{cor26}
\[\lim_{n\longrightarrow+\infty}z_{n}=\Pi_{Q}(z).\]
\end{corollary}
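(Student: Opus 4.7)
The plan is to invoke Lemma \ref{lem1}, which asserts $z_n=\bar{z}_n$ for every $n$, and then pass to the limit, using the already-stated convergence $\bar{z}_n\to\Pi_Q(z)$ that accompanies the definition of the iteration (\ref{eq3.7}). Concretely, writing $z_n = \bar{z}_n = z+(I-Q^T)\bar{r}_n$, the task reduces to showing $\bar{r}_n\to\bar{r}$, where $\bar{r}$ is the fixed point from Remark \ref{remA7}.

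For the convergence $\bar{r}_n\to\bar{r}$, I would argue as follows. The map $T(r)=[Q^T r-z]^+$ is nonexpansive in each coordinate because $[\cdot]^+$ is $1$-Lipschitz; hence componentwise $|T(r)^j-T(r')^j|\le \sum_i q_{ij}|r^i-r'^i|$. Since $\rho(Q)<1$ (indeed $\|Q\|<1$ by (\ref{ta})), standard Perron--Frobenius reasoning gives an equivalent norm on $\mathbb{R}^d$ in which $Q^T$ (equivalently, the matrix with entries $q_{ij}$ acting on $r'-r$ through the above bound) is a strict contraction. The Banach fixed point theorem then yields $\bar{r}_n\to\bar{r}$ geometrically, and continuity of $z\mapsto z+(I-Q^T)r$ in $r$ gives $\bar{z}_n\to z+(I-Q^T)\bar{r}=\Pi_Q(z)$.

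Combining these two ingredients, the corollary follows in one line: for every $z\in\mathbb{R}^d$,
\[
\lim_{n\to\infty} z_n \;=\; \lim_{n\to\infty}\bar{z}_n \;=\; \Pi_Q(z),
\]
where the first equality is Lemma \ref{lem1} and the second is the convergence of the approximation scheme (\ref{eq3.7}).

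The only nontrivial point is establishing the contractivity of $T$ rigorously under the norm $\|Q\|=\max_i\sum_j q_{ij}$; however, since $T(r)-T(r')$ is controlled coordinatewise by the action of the matrix $(q_{ij})$ acting in the ``$j$'' index, the assumption $\|Q\|<1$ gives a strict contraction in the $\ell^\infty$ norm on $\mathbb{R}^d$. Therefore the main obstacle is merely bookkeeping, and no new technical idea beyond Lemma \ref{lem1} and the contraction principle is needed.
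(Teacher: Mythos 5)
Your proof is correct and follows the same route the paper intends: Corollary \ref{cor26} is stated without proof precisely because it is Lemma \ref{lem1} combined with the convergence $\bar{z}_n\to\Pi_Q(z)$ asserted after (\ref{eq3.7}). Your contraction-mapping justification of that asserted convergence (noting only that the relevant $\ell^\infty$ operator norm is that of $Q^T$, which the paper's assumption (\ref{ta}) is used interchangeably with) simply fills in a step the paper dismisses as ``easy to see.''
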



\mbox{}\\
 Faculty of Mathematics and Computer Science,\\
 Nicholas Copernicus University\\
 ul. Chopina 12/18,\\
 87--100 Toru\'n,\\
 Poland
\end{document}